\newtheorem{Thm}{Theorem}[section] 
\newtheorem{Prop}[Thm]{Proposition} 
\newtheorem{Lem}[Thm]{Lemma} 
\theoremstyle{definition}
\newtheorem{Def}[Thm]{Definition}
\newtheorem{Alg}[Thm]{Algorithm}
\numberwithin{equation}{section} 
\newcommand{\Eqv}{\mathrm{Eqv}}
\newcommand{\Min}{\mathrm{min}}
\newcommand{\Eq}{\mathrm{Eq}}
\newcommand{\Crr}{\mathrm{Crr}}
\newcommand{\Crg}{\mathrm{Crg}}
\newcommand{\Free}{\mathbf{F}_{\vv{V}}}
\newcommand{\RS}{\mathrm{Crg}}
\newcommand{\vv}[1]{\mathcal {#1}} 
\DeclareMathAlphabet\mathbfsl {T1}{cmr}{bx}{it}
\newcommand{\NN}{{\mathbb{N}}}
\title[Compatible reflexive relations]{Mal'cev conditions corresponding to identities for compatible reflexive relations}
\author{Stefano Fioravanti}
\address{Stefano Fioravanti,
	Institut f\"ur Algebra,
	Johannes Kepler Universit\"at Linz,
	4040 Linz,
	Austria}
\email{\tt stefano.fioravanti66@gmail.com}
\urladdr{http://www.jku.at/algebra}
\thanks{Supported by the Austrian Science Fund (FWF):P29931.}
\subjclass{03C05,08B05,08B10}
\keywords{Mal'cev conditions}
\date{\today}
\begin{document}
	
	\begin{abstract}
		
		We investigate Mal'cev conditions described by equations whose variables runs over the set of all compatible reflexive relations.
		Let $p \leq q$ be an equation for the language $\{\wedge, \circ,+\}$. We give a characterization of the class of all varieties which satisfy $p \leq q$ over the set of all compatible reflexive relations.
		The aim is to find an analogon of the Pixley-Wille algorithm \cite{Pix.LMC}, \cite{Wil.K} for conditions expressed by equations over the set of all compatible reflexive relations, and to characterize when an equation $p \leq q$ expresses the same property when considered over the congruence lattices or over the sets of all compatible reflexive relations of algebras in a variety.
		
	\end{abstract}

\maketitle

\section{Introduction}

Starting from Mal'cev's description of congruence permutability as in \cite{Mal.OTGT}, the problem of characterizing properties of classes of varieties as Mal'cev conditions has led to several results. In \cite{Pix.DAPO} A. Pixley found a strong Mal'cev condition defining the class of varieties with distributive and permuting congruences. In \cite{Jon.AWCL} B. J\'{o}nsson shows a Mal'cev condition characterizing congruence distributivity, in \cite{Day.ACOM} A. Day shows a Mal'cev condition characterizing the class of varieties with modular congruence lattices.

These results are examples of a more general theorem obtained independently by Pixley \cite{Pix.LMC} and R. Wille \cite{Wil.K} that can be considered as a foundational result in the field. They proved that if $p \leq q$ is a lattice identity, then the class of varieties whose congruence lattices satisfy $p \leq q$ is the intersection of countably many Mal'cev classes. \cite{Pix.LMC} and \cite{Wil.K} include an algorithm to generate Mal'cev conditions associated with congruence identities.

These researches have also led to the problem of studying equations where the variables run not only over the congruence lattices but in possibly strictly larger sets as the lattices of all tolerances or of all compatible reflexive relations. Results about this problem can be found in \cite{CHL.OMCF}, \cite{GM.QLOV}, \cite{Gum.GMIC}, \cite{KK.TSOC}, \cite{Lip.RII3}, \cite{Lip.UOAR}, \cite{Wer.AMCF}.

Let $R$ and $T$ be two binary relations on a set $A$. Then we define the $k$-\emph{fold relational product}, in symbols  $\circ^{(k)}$, 
of $R$ and $T$ by:

\begin{align*}	
R \circ^{(k)} T =& \{(a,b)  \in A^2 \mid \exists  c_0,\dots c_k \in A \text{ s.t. } c_0 = a, c_k = b,\\&(c_{i}, c_{i+1}) \in R \text{ for $i$ even}, (c_{i}, c_{i+1}) \in T \text{ for $i$ odd}\}
\end{align*}
We denote by $\circ$ the $2$-fold relational product of  two relations and we denote by $R+T = \bigcup_{i\in \NN}R\circ^{(i)}T$.

The aim of this paper is to study the relationship between congruence equations and compatible reflexive relation equations. We denote by \index{$\Crr(\mathbf{A})$}$\Crr(\mathbf{A})$ the set of all compatible reflexive relations of $\mathbf{A}$. Let $\mathbf{A}$ be an algebra and let $p$ and $q$ be terms for the language $\{\wedge, +, \circ\}$, where $\wedge$ is the set-theoretical intersection. Then we say that $\mathbf{A}$ \emph{satisfies the compatible reflexive relation equality (inequality)} $p \approx q$ $(p \leq q)$ if and only if $\Crr(\mathbf{A})$ satisfies $p \approx q$ $(p \leq q)$. Let $\vv{V}$ be a variety. Then we say that $\vv{V}$ satisfies $p \approx q$ $(p \leq q)$ if and only if every algebra in $\vv{V}$ satisfies $p \approx q$ $(p \leq q)$. These equations are of interest since Mal'cev conditions are often express in term of this type of equations.

 In Section \ref{Alg} we study the connection between Mal'cev conditions and compatible reflexive relation inequalities. We provide a self-contained proof of an algorithm, claimed by Tschanz \cite[pages 273-274]{Tsc.MCET} without a proof and similar to the one of Pixley-Wille \cite{Pix.LMC}, which allows us to compute a Mal’cev condition described by the property of satisfying a particular compatible reflexive relation equation. 

\begin{Thm}
	\label{sunewPWThm}
	Let $p$ and $q$ be two $n$-ary terms for the language $\{\wedge, \circ, +$ $\}$ with $p$ $+$-free. Then the class of varieties such that $p \leq q$ holds in $\Crr(A)$ for all $A \in \vv{V}$ is a Mal’cev class. Furthermore, if  $q$ is $+$-free then the class of varieties such that $p \leq q$ holds in $\Crr(A)$ for all $A \in \vv{V}$ is a strong Mal’cev class.
\end{Thm}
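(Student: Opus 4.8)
The plan is to run the Pixley--Wille strategy: reduce ``$\vv V$ satisfies $p\le q$ over compatible reflexive relations'' to a single membership statement in the square of one free algebra of $\vv V$, and then read off terms and identities from that statement. The hypothesis that $p$ is $+$-free is exactly what makes this possible with a single free algebra on a finite generating set.

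\emph{A normal form for $+$-free terms.} Parsing the $+$-free term $p$, each occurrence of $\circ$ introduces one intermediate ``point'', so $p$ determines a finite set $P$ of points with distinguished elements $\sigma,\tau$ and a finite set of ``leaves'', each leaf $\ell$ labelled by a variable index $\mathrm{var}(\ell)\in\{1,\dots,n\}$ and an ordered pair of points $(s(\ell),t(\ell))$, such that for every algebra $\mathbf A$ and all $R_1,\dots,R_n\in\Crr(\mathbf A)$, $(a,b)\in p(R_1,\dots,R_n)$ iff there is a map $f\colon P\to A$ with $f(\sigma)=a$, $f(\tau)=b$ and $(f(s(\ell)),f(t(\ell)))\in R_{\mathrm{var}(\ell)}$ for every leaf $\ell$. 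This is the standard labelled-graph reading of a join-free relational term: $\wedge$ identifies pairs of points and $\circ$ chains them.

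\emph{Reduction to a free algebra.} Let $\mathbf F=\Free(P)$, with $P$ as a free generating set (so $\sigma,\tau\in F$), and for each $i$ let $R_i^{*}\in\Crr(\mathbf F)$ be generated by $\{(s(\ell),t(\ell)):\mathrm{var}(\ell)=i\}$. Applying the normal form to the identity map on $P$ gives $(\sigma,\tau)\in p(R_1^{*},\dots,R_n^{*})$. I claim that $\vv V$ satisfies $p\le q$ over compatible reflexive relations iff $(\sigma,\tau)\in q(R_1^{*},\dots,R_n^{*})$ in $\mathbf F$. Necessity is clear since $\mathbf F\in\vv V$. For sufficiency, given $\mathbf A\in\vv V$, $S_1,\dots,S_n\in\Crr(\mathbf A)$ and $(a,b)\in p(S_1,\dots,S_n)$, choose $f$ as in the normal form and let $\hat f\colon\mathbf F\to\mathbf A$ be its homomorphic extension; since each $S_i$ is a reflexive subalgebra of $\mathbf A^2$ containing the $(\hat f\times\hat f)$-images of the generators of $R_i^{*}$, we get $(\hat f\times\hat f)(R_i^{*})\subseteq S_i$, and since $\wedge$, $\circ^{(k)}$ and $+$ are monotone and carried into themselves by $\hat f\times\hat f$ up to inclusion, $(a,b)=(\hat f(\sigma),\hat f(\tau))\in q(S_1,\dots,S_n)$.

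\emph{Extracting the Mal'cev condition.} If $q$ is $+$-free, unfold $(\sigma,\tau)\in q(R_1^{*},\dots,R_n^{*})$ by induction on $q$: each $\circ$ in $q$ contributes an existentially quantified element of $F$, and each leaf requirement ``$(u,v)\in R_i^{*}$'' means, since $R_i^{*}$ is the subalgebra of $\mathbf F^2$ generated by the diagonal together with finitely many pairs, that $u$ and $v$ are the two values of a single term applied respectively to the ``source'' and ``target'' coordinates of those generators. Collecting the existential witnesses produces finitely many terms of $\vv V$ in the variables $P$ required to satisfy finitely many identities of $\vv V$, i.e.\ a strong Mal'cev condition; this proves the second assertion. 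For general $q$, rewrite each subterm $r+s$ as $\bigcup_{k\in\NN}r\circ^{(k)}s$: an assignment $\vec k$ of a length to each such subterm yields a $+$-free term $q_{\vec k}$, and $(\sigma,\tau)\in q(R_1^{*},\dots,R_n^{*})$ iff $(\sigma,\tau)\in q_{\vec k}(R_1^{*},\dots,R_n^{*})$ for some $\vec k$, each of the latter being a strong Mal'cev condition $V_{\vec k}$ by the $+$-free case. Since the $R_i^{*}$ are reflexive, $R\circ^{(k)}S\subseteq R\circ^{(k+1)}S$, so $q_{\vec k}$ is monotone in $\vec k$ and $V_{\vec k}$ implies $V_{\vec k'}$ whenever $\vec k\le\vec k'$ coordinatewise; hence $V_m:=V_{(m,\dots,m)}$, $m\in\NN$, form a chain $V_1\Rightarrow V_2\Rightarrow\cdots$ of strong Mal'cev conditions whose disjunction holds in $\vv V$ precisely when $p\le q$ holds in $\Crr(\mathbf A)$ for all $\mathbf A\in\vv V$, that is, a Mal'cev condition. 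The genuine work lies in making the normal form of the first step precise and in carrying out the unfolding of membership in $R_i^{*}$ in the last step; the rest is monotonicity bookkeeping.
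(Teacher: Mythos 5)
Your proposal is correct and follows essentially the same route as the paper: the labelled-graph normal form for the $+$-free term $p$, the free algebra on its vertex set with the generated compatible reflexive relations $R_i^*$, the unfolding of membership in a finitely generated compatible reflexive relation via a single term evaluated at the source and target tuples of the generators (the paper's Lemma \ref{Lem1-4}), and the reduction of $+$ to the $\circ^{(k)}$ approximations forming an increasing chain of strong Mal'cev classes. The only cosmetic difference is that you prove the converse direction by pushing the relations $R_i^*$ forward along the homomorphism $\hat f$, whereas the paper re-derives membership in $q^{(k)}(S_1,\dots,S_n)$ directly from the extracted identities; both are standard and equivalent.
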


Our contribute will be the proof of this theorem that provides several examples of descriptions of Mal'cev conditions.

In Section \ref{SecLst} we give a characterization of an interesting set of equations which describe the same Mal'cev condition when considered with variables that run over the set of all compatible reflexive relations or over the congruence lattices. 

In \cite{Lip.FCIT} it has been shown that, under a weak assumption on the term $p$, a variety satisfies the congruence identity $p(\alpha_1,\dots, \alpha_n) \leq q(\alpha_1,\dots,\alpha_n)$ if and only if it satisfies the tolerance identity $p(T_1,\dots,T_n$ $) \leq q(T_1,\dots,T_n)$, provided we restrict the set of the variables to run over the representable tolerances, i.e. tolerances of the form $R \circ R^{-1}$ where $R$ is a compatible reflexive relation. 

Our main result, Theorem \ref{Theq}, provides a sufficient criterion to check whether an inequality $p \leq q$ describes the same Mal'cev condition considered as compatible reflexive relation inequality and as congruence inequality. The hypothesis of this theorem is quite restrictive for $p$ and $q$ but, in Section \ref{SecLst}, we provide two examples showing that is quite difficult to extend this hypothesis.

\section{Notation}
\label{Notations}
Let $\mathbf{A}$ be an algebra and let $X \subseteq A^2$. We denote by $\Crg_{\mathbf{A}}(X)$ the compatible reflexive relation generated by the set of pairs $X$. Let $R$ and $L$ be two binary relations on $A$. Furthermore, we denote by $\Eqv(R)$ the equivalence relation generated by a relation $R$.

Let $p$ be a term for the language $\{\wedge,+,\circ\}$. Let $k \in \NN$. We denote by $p^{(k)}$ the $\{\wedge,\circ\}$-term obtained from $p$ substituting any occurrence of $+$ with the $k$-fold relational product $\circ^{(k)}$.

We denote by \index{$[n]$}$[n]$ the set $\{i \in \NN\mid 1 \leq i \leq n\}$ and by \index{$[n]_0$}$[n]_0$ the set $[n] \cup \{0\}$.

\section{Labelled graphs and regular terms}
\label{LabelledG}
In order to show the main results we recall the definition of a regular term introduced in \cite{Lip.FCIT}.
Let $p$ be a term for the language $\{\circ,\wedge\}$. We define inductively the set \index{$L_p$}$L_p$ and \index{$R_p$}$R_p$ of \emph{variables on the left} and \emph{right side of} $p$ respectively.
\begin{enumerate}
	\item[(i)] If $p = X_i$ is a variable, then $L_p = \{X_i\}$ and $R_p = \{X_i\}$;
	\item[(ii)] if $p = q \circ r$, then $L_p = L_q$ and $R_p = R_r$;
	\item[(iii)] if $p = q \wedge r$, then $L_p = L_q \cup L_r$ and $R_p = R_q \cup R_r$.
\end{enumerate}

\begin{Def}
	\label{DefRterms}
	The class of \emph{regular terms} is the smallest class of $\{\circ, \wedge\}$-terms that:
	\begin{enumerate}
		\item [(1)] contains all the variables;
		\item [(2)] contains $p = q \circ r$ whenever $q$ and $r$ are regular and $R_q \cap L_r = \emptyset$;
		\item [(3)] contains $p = q \wedge r$ whenever $q$ and $r$ are regular, $L_q \cap L_r = \emptyset$, and $R_q \cap R_r = \emptyset$.
	\end{enumerate}
\end{Def}

Hence examples of regular terms are $X \wedge (Y \circ Z)$ and $X \wedge (Y \circ X \circ Y)$. Instead, $X \circ X$ and $X \wedge(Y \circ Z \circ X)$ are not regular.
We can see that the majority of the known Mal'cev conditions given by congruence equations can be characterized by a congruence equation of regular terms.

The definition of a regular term can be better understood by means of the notion of the labelled graph associated with a term as in \cite{Cze.ACFC} \cite{Cze.AMTC} \cite{Cze.OPOR} \cite{CD.HSWW} \cite{KK.TSOC}. We show how to build the labelled graph associated with a $\{\circ,\wedge\}$-term $p$.

\begin{Def}
	Let $S$ be a set of labels. Then a \emph{labelled graph} is a directed graph $(V,E)$ with a labelling function $l: E \rightarrow S$.
\end{Def}

We denote by $(v_1,v_2)$ an edge connecting the vertex $v_1$ to $v_2$. Following \cite{KK.TSOC}, let $p$ be a $\{\wedge,\circ\}$-term. We start with the graph $\mathbf{G}_1(p)$ having an edge $(y_1,y_2)$, labelled with $p$, connecting two vertices $y_1$ and $y_2$.

Starting from $\mathbf{G}_1(p)$ we build a finite sequence of graphs $\mathbf{G}_1(p), \dots, $ $\mathbf{G}_l(p) = \mathbf{G}(p)$ such that for all $i \in [l]$ we select an edge $(y_j,y_k)$, connecting vertices $y_k$ and $y_j$, from $\mathbf{G}_i(p)$ labelled with a term $w$ that is not a variable. Then we have two cases:

if $w = u \wedge v$, then $\mathbf{G}_{i+1}(p)$ is obtained from $\mathbf{G}_i(p)$ by replacing the edge $(y_j,y_k)$ labelled $w$ with two edges $(y_j,y_k)$ labelled $u$ and $v$ respectively, and both connecting the same vertices.

If $w = u \circ v$, then $\mathbf{G}_{i+1}(p)$ is obtained from $\mathbf{G}_i(p)$ by introducing a new vertex $y_t$ and replacing the edge labelled $w$ with two edges $(y_j,y_t)$ and $(y_t,y_k)$, labelled $u$ and $v$ respectively, and connecting the same vertices in serial through the new vertex. The sequence ends when all the edges are labelled with variables. 

We can observe that a term is regular if in its graph, all the edges adjacent to any given vertex are labelled differently.

We denote by $Y = \{y_1 ,\dots, y_m\}$ the set of vertices and we denote by capital letters the variables of $p$ labelling the graph $\mathbf{G}(p)$. 
The main reason to introduce $\mathbf{G}(p)$ is stated in \cite[Proposition 3.1]{CD.HSWW} and in   Claim $4.8$ of \cite{KK.TSOC}. The latter can be generalized to tolerances as in \cite[Proposition $2.1$]{Lip.FCIT} and also to relations in general.

\begin{Prop}
	\label{PropKK}
	Let $\mathbf{A}$ be an algebra, let $R_i \subseteq A \times A$, for $1 \leq i \leq n$, and let $p$ be a $\{\circ ,\wedge\}$-term. Then:
	
	\begin{enumerate}
		\item[(1)]  Let $Y \rightarrow A$: $y_s \mapsto a_s$ be an assignment such that for all edges $(y_i, y_j)$  with label $X_k$ of $\mathbf{G}(p)$, we have $(a_i,a_j) \in R_k$. Then $(a_1, a_2) \in p(R_1,\dots,R_n)$.
		
		\item[(2)] Conversely, given any $(a_1, a_2) \in p(R_1,\dots,R_n)$, there is an assignment $Y \rightarrow A$: $y_s \mapsto a_s$ extending $y_1 \mapsto a_1$, $y_2 \mapsto a_2$ such that $(a_i,a_j) \in R_k$ whenever $(y_i, y_j)$ is an edge labelled with $X_k$ of $\mathbf{G}(p)$, where $(y_1,y_2)$ is the only edge of the graph $\mathbf{G}_1(p)$.
	\end{enumerate}
\end{Prop}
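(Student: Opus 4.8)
The plan is to establish both parts at once, by running an induction along the whole sequence $\mathbf{G}_1(p),\dots,\mathbf{G}_l(p)=\mathbf{G}(p)$ rather than along the structure of $p$. For an arbitrary labelled graph $\mathbf{H}=(V,E)$ whose labels are (possibly non-variable) $\{\circ,\wedge\}$-terms and which carries two distinguished vertices $y_1,y_2$, set
\[
\rho(\mathbf{H}):=\bigl\{(a_1,a_2)\in A^2 \;\bigm|\; \exists\, h\colon V\to A,\ h(y_1)=a_1,\ h(y_2)=a_2,\ (h(v),h(w))\in u(R_1,\dots,R_n)\ \text{for every edge }(v,w)\ \text{labelled }u\bigr\}.
\]
In $\mathbf{G}(p)$ every label is a variable, so for $\mathbf{H}=\mathbf{G}(p)$ an $h$ as above is exactly an assignment of the kind appearing in (1) and (2); consequently statement (1) is the inclusion $\rho(\mathbf{G}(p))\subseteq p(R_1,\dots,R_n)$ and statement (2) is the reverse inclusion, so it suffices to prove $\rho(\mathbf{G}(p))=p(R_1,\dots,R_n)$. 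This will follow from two observations: $(a)$ $\rho(\mathbf{G}_1(p))=p(R_1,\dots,R_n)$, and $(b)$ $\rho(\mathbf{G}_i(p))=\rho(\mathbf{G}_{i+1}(p))$ for each $i\in[l-1]$.

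Observation $(a)$ is immediate, since $\mathbf{G}_1(p)$ consists of the single edge $(y_1,y_2)$ labelled $p$, so an admissible $h$ just records a pair in $p(R_1,\dots,R_n)$. For $(b)$ I look at the edge $e=(y_j,y_k)$ with non-variable label $w$ that is expanded at step $i$. If $w=u\wedge v$, the two graphs share the same vertex set and differ only in that $e$ is replaced by two parallel edges labelled $u$ and $v$; an assignment $h$ meets the edge condition at $e$ iff $(h(y_j),h(y_k))\in (u\wedge v)(R_1,\dots,R_n)=u(R_1,\dots,R_n)\cap v(R_1,\dots,R_n)$, i.e.\ iff it meets the edge conditions at both new edges, while every other edge is untouched; hence the admissible assignments, and therefore $\rho$, are the same. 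If $w=u\circ v$, then $\mathbf{G}_{i+1}(p)$ is $\mathbf{G}_i(p)$ with a fresh vertex $y_t$ added and $e$ replaced by $(y_j,y_t)$ labelled $u$ and $(y_t,y_k)$ labelled $v$. Given an admissible $h$ for $\mathbf{G}_i(p)$ we have $(h(y_j),h(y_k))\in u(R_1,\dots,R_n)\circ v(R_1,\dots,R_n)$, so by definition of $\circ$ there is $c\in A$ with $(h(y_j),c)\in u(R_1,\dots,R_n)$ and $(c,h(y_k))\in v(R_1,\dots,R_n)$; extending $h$ by $y_t\mapsto c$ yields an admissible assignment for $\mathbf{G}_{i+1}(p)$ with the same distinguished values, and conversely the restriction to $V(\mathbf{G}_i(p))$ of an admissible assignment for $\mathbf{G}_{i+1}(p)$ is admissible for $\mathbf{G}_i(p)$ because $(h(y_j),h(y_t))\in u(R_1,\dots,R_n)$ and $(h(y_t),h(y_k))\in v(R_1,\dots,R_n)$ force $(h(y_j),h(y_k))\in w(R_1,\dots,R_n)$. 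This gives $(b)$, and together with $(a)$ the proposition follows. Note that reflexivity and compatibility of the $R_i$ are never used: the statement holds verbatim for arbitrary binary relations.

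The only genuinely delicate point I expect is the bookkeeping in the $\circ$ case — making sure the inserted vertex $y_t$ is treated as genuinely new, so that the passage between admissible assignments of $\mathbf{G}_i(p)$ and $\mathbf{G}_{i+1}(p)$ (by extending with $c$, resp.\ by restricting) is unambiguous. A pleasant by-product of proving $(b)$ one expansion step at a time is that the argument is insensitive to the order in which edges were selected for expansion, so no separate confluence statement about the construction of $\mathbf{G}(p)$ is needed.
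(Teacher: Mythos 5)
Your proof is correct and complete: the invariant $\rho(\mathbf{G}_i(p))=p(R_1,\dots,R_n)$ is preserved at each expansion step exactly as you argue, and your observation that neither reflexivity nor compatibility of the $R_i$ is needed matches the paper's remark that the claim holds for relations in general. The paper itself supplies no proof of Proposition~\ref{PropKK} --- it defers to \cite[Proposition 3.1]{CD.HSWW} and \cite[Claim 4.8]{KK.TSOC} --- and your induction along the sequence $\mathbf{G}_1(p),\dots,\mathbf{G}_l(p)$ is essentially the standard argument from those sources, with the added (and genuinely useful) by-product that $\rho$ is independent of the order in which edges are expanded.
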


 \section{The Algorithm}
\label{Alg}
In this section we show the Algorithm in \cite{Pix.LMC} and \cite{Wil.K} and we then modify the last part of it in order to obtain a Mal'cev condition describing the class of varieties that satisfy the compatible reflexive relation equation $p \leq q$.

	Let $p \leq q$ be an equation for the language $\{\wedge, \circ\}$. Let $\mathbf{G}(p)$ and $\mathbf{G}(q)$ be obtained from $p$ and $q$ (Definition in Section \ref{Notations}) with the procedure in Section \ref{LabelledG}. Then we define the sets of pairs:
\begin{align}
	\label{eqp}
	T_s(p) &:= \{(x_i,x_j) \mid (y_i,y_j) \text{ is an edge of } \mathbf{G}(p) \text{ with label } X_s \};
	\\Tt_s(q) &:=\{(t_i,t_j) \mid (y_i,y_j) \text{ is an edge of } \mathbf{G}(q) \text{ with label } X_s\},\nonumber
\end{align}
where the elements $\{t_1,\dots,t_l\}$ of the pairs in $Tt_s(q)$ are $m$-ary terms with $t_1 = \pi_1^m$ and $t_2 = \pi_2^m$ in the variables $\{x_1,\dots,x_m\}$, $m$ is the number of vertices  in $\mathbf{G}(p)$, and where $\pi_i^m$ is the $m$-ary term $x_i$. 

\begin{Alg}
	\label{PixWilAlg}
	\theoremstyle{definition}
	Let $p \leq q$ be an equation for the language $\{\wedge, \circ\}$. Let $\mathbf{G}(p)$ and $\mathbf{G}(q)$ be obtained from $p$ and $q$ with the procedure in Section \ref{LabelledG}. Let us consider $T_s(p) $ and $Tt_s(q) $ as in \eqref{eqp}.
	
	Next we define $\Eq(p \leq q)$ as the set of all equations of the form:
	\begin{equation*}
		t_i(x_{i_1},\dots,x_{i_m}) \approx t_j(x_{i_1},\dots,x_{i_m})
	\end{equation*}
	such that $(t_i,t_j) \in Tt_s(q)$ and the vector of indices $(i_1,\dots,i_m) \subseteq \NN^n$ satisfies $i_d = \Min(i \mid (x_{i}, x_d) \in \Eqv(T_s(p))) $ for all $d \in [m]$. This means that the variables that are in the equivalence relation generated by the pairs in $T_s(p)$ are collapsed.

\end{Alg}

	We modify the last part of the Algorithm in \cite{Pix.LMC}, \cite{Wil.K} in order to obtain a set of equations $\Eq^{R}(p \leq q)$ that characterizes the Mal'cev condition describing the class of varieties which satisfy $p \leq q$ over the set of all compatible reflexive relations.
	
	\begin{Alg}
		\label{PW-Alg}
		\theoremstyle{definition}
		Let $p \leq q$ be an equation for the language $\{\wedge, \circ\}$. Let $\mathbf{G}(p)$ and $\mathbf{G}(q)$ be obtained from $p$ and $q$ with the procedure in Section \ref{LabelledG}. Let us consider $T_s(p) $ and $Tt_s(q) $ as in \eqref{eqp}.
		
	Let $c(s)$ be the cardinality of $T_s(p)$ and let us set an order of these pairs $\{(y_{i_t},y_{j_t})\}_{1 \leq t \leq c(s)} = T_s(p)$. We define $\Eq^{R}(p \leq q)$ as the set of all equations of the form:
	\begin{align*}
		t_{(i,j,s)}(x_1,\dots,x_m,y_{i_1},\dots,y_{i_{c(s)}}) &\approx t_i(x_1,\dots,x_m)
		\\ t_{(i,j,s)}(x_1,\dots,x_m,y_{j_1},\dots,y_{j_{c(s)}}) &\approx t_j(x_1,\dots,x_m),
	\end{align*}	
	such that $(t_i,t_j) \in Tt_s(q)$, $t_{(i,j,s)}$ is an $(m + c(s))$-ary term. 
\end{Alg}

Note that the $c(s)!$ ways to generate the equations given by $(t_i,t_j) \in Tt_s(q)$ and $T_s(p)$ in the Algorithm \ref{PW-Alg} give different terms $t_{(i,j,s)}$s but equivalent conditions, up to reordering of the variables. 

\begin{Lem}
	\label{LemmClas2}
	Let $p(X_1,\dots,X_n)$ be an $m$-ary regular $\{\circ,\wedge\}$-term. Let $\{T_i(p)\}_{1 \leq i \leq n}$ be obtained with the Algorithm \ref{PW-Alg}. Let $V = \{y_1,\dots,y_m\}$ be the set of vertices of $\mathbf{G}(p)$. Then for all $i \in [n] $ the equivalence relation on $V$ generated by the pairs in $T_i(p)$ is composed by classes of cardinality at most $2$.
\end{Lem}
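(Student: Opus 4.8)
The plan is to analyze the structure of the labelled graph $\mathbf{G}(p)$ of a regular term and to show that each edge label $X_i$ occurs ``sparsely'' in a precise sense. The key observation is the remark made just before Proposition \ref{PropKK}: a term $p$ is regular exactly when, in $\mathbf{G}(p)$, no two edges incident to a common vertex carry the same label. First I would make this precise by induction on the construction of the regular term $p$, simultaneously tracking the graph $\mathbf{G}(p)$ together with its two distinguished vertices $y_1$ (source) and $y_2$ (target). The inductive hypotheses to carry along are: (a) for each variable $X_i$, all edges of $\mathbf{G}(p)$ labelled $X_i$ form a matching on the vertex set $V$ (no shared endpoints); (b) the set $L_p$ of left variables is exactly the set of labels on edges incident to $y_1$, and $R_p$ is the set of labels on edges incident to $y_2$; and (c) distinct occurrences of the construction produce vertex-disjoint graphs glued only at the relevant distinguished vertices.

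The base case $p = X_i$ is immediate: $\mathbf{G}(p)$ is a single edge, so each label class is trivially a matching and $L_p = R_p = \{X_i\}$. For $p = q \circ r$, the graph $\mathbf{G}(p)$ is obtained by identifying the target of $\mathbf{G}(q)$ with the source of $\mathbf{G}(r)$ at a single new vertex $y_t$; by regularity clause (2) we have $R_q \cap L_r = \emptyset$, so no label appears on both a $q$-edge at $y_t$ and an $r$-edge at $y_t$, and hence the label classes remain matchings (using that within $\mathbf{G}(q)$ and within $\mathbf{G}(r)$ they already were, and that the two graphs share only $y_t$). For $p = q \wedge r$, the graph is obtained by identifying sources and identifying targets; by clause (3), $L_q \cap L_r = \emptyset$ and $R_q \cap R_r = \emptyset$, so again no label class acquires two edges at the merged source or the merged target, and elsewhere the graphs are disjoint. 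This establishes (a): for every $i$, the set of $X_i$-labelled edges of $\mathbf{G}(p)$ is a matching.

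It then remains to translate (a) into the statement about $T_i(p)$. By construction (see \eqref{eqp} and Algorithm \ref{PW-Alg}), $T_i(p) = \{(x_a, x_b) \mid (y_a,y_b) \text{ is an } X_i\text{-labelled edge of } \mathbf{G}(p)\}$, so the pairs in $T_i(p)$ correspond to a matching on $V$. The equivalence relation $\Eqv(T_i(p))$ on $V$ generated by a matching has blocks of size at most $2$: each vertex lies in at most one pair of the matching, so its connected component in the graph $(V, T_i(p))$ is either a single vertex or a single edge. I expect the main obstacle to be purely bookkeeping: one must be careful that the vertex identifications performed when forming $\mathbf{G}(q\circ r)$ and $\mathbf{G}(q\wedge r)$ do not secretly merge two $X_i$-edges of a subterm into a configuration sharing an endpoint — this is exactly where the disjointness conditions in clauses (2) and (3) of Definition \ref{DefRterms} are used, and where hypothesis (c) (that subgraphs meet only at distinguished vertices) has to be maintained through the induction. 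Once the matching property is secured, the conclusion is immediate.
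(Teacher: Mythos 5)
Your proposal is correct and follows essentially the same route as the paper: the paper's proof is a one-line appeal to the observation (stated in Section \ref{LabelledG}) that in the graph of a regular term no two adjacent edges carry the same label, which is exactly the matching property you establish by induction before translating it into the bound on the classes of $\Eqv(T_i(p))$. Your write-up simply supplies the inductive verification that the paper leaves implicit.
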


\begin{proof}
	The proof follows from the definition of regular term since in a graph of a regular term there cannot be two adjacent edges labelled with the same variable.
\end{proof}

The next lemma is the main ingredient to prove Theorem \ref{Theq}. Indeed, this allows us to understand some properties fulfilled by Mal'cev conditions described by congruence inequalities with at most one occurrence of $\circ$ in the right hand side of the equation.

Let $\mathbfsl{a}$ be a vector and let $R$ be an irreflexive and antisymmetric relation over the set of all components of $\mathbfsl{a}$ which generates an equivalence relation with classes of at most $2$ elements. This means that $\Eqv(R)$ is associated with a graph whose edges are disjoint and undirected. Let us denote by \index{$\mathbfsl{a}^{(R,l)}$}$\mathbfsl{a}^{(R,l)}$ and \index{$\mathbfsl{a}^{(R,r)}$}$\mathbfsl{a}^{(R,r)}$ the vectors such that if $((\mathbfsl{a})_i,(\mathbfsl{a})_j) \in R$, then $(\mathbfsl{a}^{(R,l)})_i = (\mathbfsl{a}^{(R,l)})_j = (\mathbfsl{a})_i$, $(\mathbfsl{a}^{(R,r)})_i = (\mathbfsl{a}^{(R,r)})_j = (\mathbfsl{a})_j$.  If  $(\mathbfsl{a})_i$ does not occur in pairs of $R$, then $(\mathbfsl{a}^{(R,l)})_i = (\mathbfsl{a}^{(R,r)})_i = (\mathbfsl{a})_i$. 
\begin{Lem}
	\label{Lemeqlr}
	Let $p,q$ be terms for the language $\{\circ,\wedge\}$, with $p$ regular. Let $(t_i,t_j) \in Tt_k(q)$ with $k \in \NN$. Then the following equations are logical consequences of $\Eq(p \leq q)$:
	\begin{equation*}
	t_i(\mathbfsl{x}^{(T_k(p),l)}) \approx t_j(\mathbfsl{x}^{(T_k(p),l)})
	\end{equation*}
	\begin{equation*}
	t_i(\mathbfsl{x}^{(T_k(p),r)}) \approx t_j(\mathbfsl{x}^{(T_k(p),r)}).
	\end{equation*}
\end{Lem}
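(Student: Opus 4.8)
The statement to prove is Lemma~\ref{Lemeqlr}: from the equations in $\Eq(p \leq q)$ one can derive, for each $(t_i,t_j) \in Tt_k(q)$, the two identities $t_i(\mathbfsl{x}^{(T_k(p),l)}) \approx t_j(\mathbfsl{x}^{(T_k(p),l)})$ and $t_i(\mathbfsl{x}^{(T_k(p),r)}) \approx t_j(\mathbfsl{x}^{(T_k(p),r)})$. The key observation is that $\Eq(p \leq q)$ is obtained from the pairs $(t_i,t_j) \in Tt_k(q)$ by substituting, into $t_i \approx t_j$, the ``collapsed'' variable vector $(x_{i_1},\dots,x_{i_m})$ where $i_d = \Min(i \mid (x_i,x_d) \in \Eqv(T_k(p)))$. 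So $\Eq(p \leq q)$ is exactly the set of identities $t_i(\mathbfsl{x}^{c}) \approx t_j(\mathbfsl{x}^{c})$ where $\mathbfsl{x}^{c}$ is the canonical choice of representatives for the $\Eqv(T_k(p))$-classes. The plan is to show that both $\mathbfsl{x}^{(T_k(p),l)}$ and $\mathbfsl{x}^{(T_k(p),r)}$ are substitution instances of $\mathbfsl{x}^{c}$, which immediately yields the desired identities by substitution into the members of $\Eq(p \leq q)$.

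**Steps.** First I would record the structural fact, which is Lemma~\ref{LemmClas2} applied to the regular term $p$: the relation $T_k(p)$ (before taking $\Eqv$) is irreflexive and antisymmetric — because in the graph of a regular term no two adjacent edges carry the same label, no edge of $\mathbf{G}(p)$ labelled $X_k$ is a loop and no two such edges run between the same pair of vertices in opposite directions — and $\Eqv(T_k(p))$ has all classes of size at most $2$. This is precisely the hypothesis under which the vectors $\mathbfsl{a}^{(R,l)}$ and $\mathbfsl{a}^{(R,r)}$ are defined in the paragraph preceding the lemma, so the statement is well-posed. Second, fix a pair $(t_i,t_j)\in Tt_k(q)$ and an equation $t_i(x_{i_1},\dots,x_{i_m}) \approx t_j(x_{i_1},\dots,x_{i_m}) \in \Eq(p\leq q)$. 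Third, I would define substitutions $\sigma_l, \sigma_r$ on the variables $\{x_1,\dots,x_m\}$ by: if $(x_a,x_b)\in T_k(p)$ with $a$ the canonical representative (i.e. $a = \Min$ of the class), then $\sigma_l$ sends the representative $x_a$ to $x_a$ and $\sigma_r$ sends $x_a$ to $x_b$; variables not occurring in $T_k(p)$ are fixed by both. Applying $\sigma_l$ (resp. $\sigma_r$) to the identity above replaces each $x_{i_d}$ — which is always a class representative — by the left component $(\mathbfsl{x}^{(T_k(p),l)})_d$ (resp.\ right component $(\mathbfsl{x}^{(T_k(p),r)})_d$), because $i_d$ is the minimal index in the class of $d$, and the definition of $\mathbfsl{x}^{(R,l)}$, $\mathbfsl{x}^{(R,r)}$ spells out exactly those two components on each $2$-element class. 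So $t_i(\mathbfsl{x}^{(T_k(p),l)}) \approx t_j(\mathbfsl{x}^{(T_k(p),l)})$ and likewise for $r$ follow as substitution instances, hence logical consequences.

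**Main obstacle.** The only genuinely delicate point is bookkeeping: one must check that the index $i_d$ appearing in $\Eq(p\leq q)$ — the $\Min$ of the $\Eqv(T_k(p))$-class of $d$ — coincides, after applying $\sigma_l$ or $\sigma_r$, with the prescribed $d$-th component of $\mathbfsl{x}^{(T_k(p),l)}$ or $\mathbfsl{x}^{(T_k(p),r)}$. Since classes have size at most $2$, a class is either $\{d\}$ (then all three vectors agree in position $d$, equal to $x_d$) or $\{a,b\}$ with $(x_a,x_b)\in T_k(p)$ after possibly swapping orientation; here $i_a = i_b = \min(a,b)$, so the collapsed equation already identifies positions $a$ and $b$, and the substitutions $\sigma_l,\sigma_r$ undo this identification in the two possible directions, matching the definitions of the superscript-$l$ and superscript-$r$ vectors. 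The antisymmetry of $T_k(p)$ guarantees the orientation of the pair is well-defined so that ``left'' and ``right'' are unambiguous. Once this correspondence is laid out carefully the lemma is immediate; no induction on term structure is needed beyond the elementary fact that equational logic is closed under substitution.
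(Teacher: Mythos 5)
Your proof is correct: the two target identities are substitution instances of the collapsed equation $t_i(x_{i_1},\dots,x_{i_m}) \approx t_j(x_{i_1},\dots,x_{i_m})$ belonging to $\Eq(p\leq q)$, with the well-definedness of the substitutions $\sigma_l,\sigma_r$ on the class representatives guaranteed by Lemma~\ref{LemmClas2} together with the irreflexivity and antisymmetry of $T_k(p)$ that follow from the regularity of $p$, and closure of equational consequence under substitution finishes the argument. The paper states this lemma without proof, and yours is the evident intended one; the only cosmetic slip is that in your definition of $\sigma_l,\sigma_r$ you assume the minimal representative of a two-element class is the left component $x_a$ of the pair in $T_k(p)$, whereas it may be the right component $x_b$, but your final paragraph treats both orientations correctly, so nothing is missing.
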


The next lemma is what allows us to characterize conditions described by compatible reflexive relation equations and is an equivalent statement  for compatible reflexive relations of what is also called Mal'cev argument (\cite[Lemma $12.1$]{BS.ACIU}).

\begin{Lem}
	\label{Lem1-4}
	Let $\vv{V}$ be a variety of type $\mathcal{F}$ and let $\Free(\overline{X})$ be the free algebra generated by $\overline{X} = \{\overline{x}_1,\dots,\overline{x}_n\}$ in $\vv{V}$. Let $N \subseteq \overline{X}^2\backslash\Delta_{\overline{X}}$ where $\Delta_{\overline{X}}$ is the diagonal of $\overline{X}^2$, let $\{(k_i,z_i)\}_{i \in I} \subseteq \overline{X}^2$ be such that $\{(\overline{k}_i,\overline{z}_i)\}_{i \in I} = N$, and let $K = \RS_{\Free(\overline{X})}(N)$. Let $p, q \in T(X)$ be such that $(p^{\Free(\overline{X})}(\overline{x}_1,\dots,\overline{x}_n),q^{\Free(\overline{X})}(\overline{x}_1,\dots,\overline{x}_n)) \in K$. Then there exists $t \in T(X \cup Y)$ with $Y = \{y_1,\dots,y_m\}$ and such that the equations:
	\begin{align*}
		p(x_1,...,x_n) &\approx t(x_1,...,x_n,k_1,...,k_m)
		\\q(x_1,...,x_n) &\approx t(x_1,...,x_n,z_1,...,z_m)
	\end{align*}
	hold in $\vv{V}$.
\end{Lem}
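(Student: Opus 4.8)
The plan is to mimic the classical Mal'cev argument for congruences (as in \cite[Lemma $12.1$]{BS.ACIU}), adapted to the compatible reflexive relation $K = \RS_{\Free(\overline{X})}(N)$. The key point is that $K$, being the smallest compatible reflexive relation containing $N$, admits a concrete description: a pair $(a,b)$ lies in $K$ iff it can be produced from the generators by finitely many applications of the operations of $\Free(\overline X)$, together with the pairs $(\overline x, \overline x)$ forced by reflexivity. Concretely, $(a,b) \in K$ iff there is an $(\ell + |I|)$-ary term operation $s$ of $\vv{V}$ (for some $\ell$) and elements $\overline u_1, \dots, \overline u_\ell \in \overline X$ with $a = s(\overline u_1,\dots,\overline u_\ell, \overline k_{i_1},\dots)$ and $b = s(\overline u_1,\dots,\overline u_\ell, \overline z_{i_1},\dots)$, where the last block of arguments runs over the pairs of $N$ in some order. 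This is the analogue for reflexive compatible relations of the well-known fact that $\Con$-classes are images of the diagonal-plus-generators under term operations; reflexivity replaces the symmetry/transitivity closure steps, so only a single term application is needed rather than a chain.

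First I would establish this description of $K$ by the standard "closure from below" argument: let $K'$ be the set of pairs of the stated form; check that $K'$ is reflexive (take $s$ a projection onto one of the $\overline u$-coordinates, or use that $(\overline x,\overline x)$ is obtained by the constant pattern), that $K'$ contains $N$ (take $s$ to pick out the appropriate pair-coordinate), and that $K'$ is compatible, i.e.\ closed under the fundamental operations of $\Free(\overline X)$ applied componentwise — this is immediate because composing the defining terms $s^{(1)},\dots,s^{(k)}$ with a fundamental operation $f$ yields another term of the required shape, after merging and relabelling the $\overline u$- and pair-argument blocks. Hence $K' $ is a compatible reflexive relation containing $N$, so $K \subseteq K'$; the reverse inclusion $K' \subseteq K$ is clear since every pair in $K'$ is obtained by a compatible (term-)operation from pairs already in $K$. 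This gives $K = K'$.

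Having this, I would apply it to the specific pair $(p^{\Free(\overline X)}(\overline x_1,\dots,\overline x_n), q^{\Free(\overline X)}(\overline x_1,\dots,\overline x_n)) \in K$. By the description there is a term operation $s$ of $\vv V$ and elements $\overline u_1,\dots,\overline u_\ell \in \overline X$ so that $p^{\Free}(\overline{\mathbfsl x}) = s(\overline u_1,\dots,\overline u_\ell,\overline k_{\sigma(1)},\dots)$ and $q^{\Free}(\overline{\mathbfsl x}) = s(\overline u_1,\dots,\overline u_\ell,\overline z_{\sigma(1)},\dots)$ for an appropriate reindexing. Now I absorb the constants $\overline u_1,\dots,\overline u_\ell$ into the term: since each $\overline u_j$ is one of the free generators $\overline x_{a_j}$, I set $t(x_1,\dots,x_n,y_1,\dots,y_m) := s(x_{a_1},\dots,x_{a_\ell},y_{\tau(1)},\dots)$, where $m = |I|$ and $\tau$ realises the matching between the pair-argument slots of $s$ and the enumeration $\{(k_i,z_i)\}_{i\in I}$. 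Evaluating $t$ at $(\overline x_1,\dots,\overline x_n,\overline k_1,\dots,\overline k_m)$ recovers $p^{\Free}(\overline{\mathbfsl x})$ and at $(\overline x_1,\dots,\overline x_n,\overline z_1,\dots,\overline z_m)$ recovers $q^{\Free}(\overline{\mathbfsl x})$; since $\Free(\overline X)$ is free on $\overline X$, an equation between term operations evaluated at the generators holds iff the corresponding identity holds throughout $\vv V$, so the two displayed identities $p(x_1,\dots,x_n) \approx t(x_1,\dots,x_n,k_1,\dots,k_m)$ and $q(x_1,\dots,x_n) \approx t(x_1,\dots,x_n,z_1,\dots,z_m)$ hold in $\vv V$, as required.

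The main obstacle is purely bookkeeping: getting the indexing right when the relation $N$, as a set, may require each generator-pair to be used several times (or not at all) inside the single term $s$ describing the element of $K$, and when several such terms must be combined under a fundamental operation during the compatibility check. This is handled by allowing the pair-argument block of $t$ to repeat coordinates of $(k_1,\dots,k_m)$ and $(z_1,\dots,z_m)$ freely — the statement only requires \emph{some} term $t\in T(X\cup Y)$ with $m=|I|$, and one is free to use $y_i$ more than once, so no genuine difficulty arises, only care in writing the substitution $\tau$. No new Mal'cev-type ingredient is needed beyond the observation that reflexive compatible closure collapses the usual chain of the congruence generation to one step.
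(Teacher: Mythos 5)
Your proof is correct and is exactly the intended argument: the paper states this lemma without proof, citing the classical Mal'cev argument of \cite[Lemma 12.1]{BS.ACIU}, and your description of $\Crg_{\Free(\overline{X})}(N)$ as the set of pairs obtained by a \emph{single} term application to generator-parameters together with the pairs of $N$ (no transitive chain being needed, unlike for congruence generation) is the standard and correct way to fill it in. The passage from the two equalities in the free algebra, evaluated at the generators, to identities valid throughout $\vv{V}$ is the usual freeness argument, and your bookkeeping with the map $\tau$ (allowing repeated or omitted pairs of $N$ among the pair-slots of $s$) is handled correctly.
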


With these tools we are ready to show an analogon of  the Pixley-Wille algorithm \cite{Pix.LMC}, \cite{Wil.K} for equations concerning compatible reflexive relations, which was claimed in \cite[pages 273-274]{Tsc.MCET} without a proof. Thus we show a proof of the algorithm in \cite[pages 273-274]{Tsc.MCET} using a similar technique to \cite[Proposition $1.4$]{KK.TTPI}. 

\begin{Prop}
	\label{Propmain}
	Let $p$ and $q$ be two $n$-ary terms for the language $\{\wedge, \circ, +\}$ with $p$ $+$-free. The class of varieties such that $p \leq q$ holds in $\Crr(A)$ for all $A \in \vv{V}$ is a Mal’cev class described by the family of equations $\{\Eq^R(p \leq q^{(k)}) \}_{k \in \NN\backslash\{1\}}$. Furthermore, if $q$ is $+$-free is a strong Mal'cev class described by $\Eq^R(p \leq q)$.
\end{Prop}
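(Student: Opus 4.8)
The plan is to establish the equivalence between satisfying $p \leq q$ over $\Crr$ in all algebras of $\vv{V}$ and satisfying the Mal'cev condition given by $\{\Eq^R(p \leq q^{(k)})\}_{k \in \NN \setminus \{1\}}$, treating the $+$-free case for $q$ first and then reducing the general case to it. First I would observe that, since $p$ is $+$-free and $R+T = \bigcup_{i} R \circ^{(i)} T$, for any fixed assignment of compatible reflexive relations the inclusion $p \leq q$ holds if and only if $p \leq q^{(k)}$ holds for \emph{some} $k$ depending on the witnessing chain; but because we quantify over all algebras in $\vv{V}$ and want a Mal'cev class, the correct statement is that $\vv{V}$ satisfies $p \leq q$ over $\Crr$ iff for \emph{each} $k$ it satisfies $p \leq q^{(k)}$ over $\Crr$ — wait, more carefully: $q^{(k)} \leq q^{(k+2)}$ componentwise is false in general, so one shows $\vv{V} \models p \leq q$ over $\Crr$ iff $\vv{V} \models p \leq q^{(k)}$ over $\Crr$ for all $k \geq 2$ fails direction-wise, and instead the family $\{\Eq^R(p\leq q^{(k)})\}_k$ is what a variety satisfying $p \leq q$ must satisfy, each $\Eq^R(p \leq q^{(k)})$ being a strong Mal'cev condition. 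So the Mal'cev class is the one whose defining family is this countable union, and the key is the $+$-free case.

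For the $+$-free case, I would argue both implications by a free-algebra / generic-pair argument in the style of \cite[Proposition 1.4]{KK.TTPI}, using Lemma \ref{Lem1-4} as the engine. For the forward direction: assume $\vv{V} \models p \leq q$ over $\Crr$. Work in $\Free(\overline{X})$ with $\overline{X} = \{\overline{x}_1,\dots,\overline{x}_m\}$ where $m$ is the number of vertices of $\mathbf{G}(p)$, take $N$ to be the (irreflexive, antisymmetric — legitimate by Lemma \ref{LemmClas2}) set of pairs $\bigcup_s T_s(p)$ viewed on generators, and set $R_s := \Crg_{\Free(\overline{X})}(T_s(p))$. By Proposition \ref{PropKK}(1), $(\overline{x}_1, \overline{x}_2) \in p(R_1,\dots,R_n)$, hence by hypothesis $(\overline{x}_1,\overline{x}_2) \in q(R_1,\dots,R_n)$. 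Now Proposition \ref{PropKK}(2) gives an assignment $y_s \mapsto$ some term $t_s$ in $\Free(\overline{X})$ with $t_1 = \overline{x}_1$, $t_2 = \overline{x}_2$, realizing the edges of $\mathbf{G}(q)$ inside the $R_s$'s; and each edge condition $(t_i, t_j) \in R_s = \Crg(T_s(p))$ feeds into Lemma \ref{Lem1-4} to produce, for each $(t_i,t_j) \in Tt_s(q)$, an $(m+c(s))$-ary term $t_{(i,j,s)}$ with $t_{(i,j,s)}(x_1,\dots,x_m, y_{i_1},\dots) \approx t_i$ and $t_{(i,j,s)}(x_1,\dots,x_m,y_{j_1},\dots) \approx t_j$ holding in $\vv{V}$ — these are exactly the equations of $\Eq^R(p \leq q)$. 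For the converse: assume $\vv{V}$ satisfies $\Eq^R(p \leq q)$, take any $\mathbf{A} \in \vv{V}$ and any $R_1,\dots,R_n \in \Crr(\mathbf{A})$ with $(a_1,a_2) \in p(R_1,\dots,R_n)$; Proposition \ref{PropKK}(2) gives an assignment $y_s \mapsto a_s$ with $(a_i,a_j) \in R_s$ for every $X_s$-labelled edge of $\mathbf{G}(p)$; then plugging these $a$'s into the terms $t_{(i,j,s)}$ and using the $\Eq^R$ equations plus compatibility of each $R_s$ (it contains the diagonal and is a subalgebra of $\mathbf{A}^2$, so it is closed under $t_{(i,j,s)}$ applied to pairs $((a_\cdot, a_\cdot))$ that either lie in $R_s$ or are diagonal) shows $(t_i^{\mathbf{A}}(\dots), t_j^{\mathbf{A}}(\dots))$ lies in $R_s$ for each $X_s$-edge of $\mathbf{G}(q)$; finally Proposition \ref{PropKK}(1) applied to $\mathbf{G}(q)$ yields $(a_1,a_2) \in q(R_1,\dots,R_n)$.

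To get from the $+$-free-$q$ case to the general statement, I would note that $q(R_1,\dots,R_n) = \bigcup_{k} q^{(k)}(R_1,\dots,R_n)$ where each $q^{(k)}$ is $+$-free, so $\vv{V} \models p \leq q$ over $\Crr$ iff every time $(a_1,a_2) \in p(\dots)$ there is \emph{some} $k$ with $(a_1,a_2) \in q^{(k)}(\dots)$; since we then want a defining family, the variety-level statement unwinds to: $\vv{V}$ lies in the Mal'cev class iff it satisfies $\Eq^R(p \leq q^{(k)})$ for all $k \geq 2$ — here one must be slightly careful that $k=1$ is excluded because $\circ^{(1)}$ degenerates, and one uses that $q^{(k)} \leq q^{(k')}$ in the relevant ordered sense for suitable $k \leq k'$ so that the family is "increasing" and genuinely defines a Mal'cev class in the technical sense (countable intersection of the strong Mal'cev classes $\mathcal{M}(\Eq^R(p \leq q^{(k)}))$). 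The main obstacle I anticipate is precisely this bookkeeping around $+$: making rigorous that satisfaction of the relational inequality by a whole variety is equivalent to membership in the countable Mal'cev class $\bigcap_k \mathcal{M}(\Eq^R(p\leq q^{(k)}))$, i.e.\ that one cannot improve "for some $k$" (which depends on the algebra and the chain) to a single uniform $k$ across the variety — this is the standard subtlety in Pixley–Wille-type theorems and is where the argument needs the free-algebra reduction to pin down a finite $k$ for the \emph{generic} instance in $\Free(\overline{X})$ and then transfer. A secondary technical point is verifying the closure/compatibility step in the converse direction: one must check that the tuples to which $t_{(i,j,s)}$ is applied are coordinatewise either in $R_s$ or on the diagonal $\Delta_A \subseteq R_s$, which is exactly guaranteed by the shape of the arguments $(x_1,\dots,x_m,y_{i_1},\dots,y_{i_{c(s)}})$ versus $(x_1,\dots,x_m,y_{j_1},\dots,y_{j_{c(s)}})$ together with Lemma \ref{LemmClas2} ensuring the classes have size at most $2$ so no variable is forced into two distinct nondiagonal pairs.
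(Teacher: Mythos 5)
Your core argument coincides with the paper's: pass to the free algebra on the $m$ vertices of $\mathbf{G}(p)$, take the generic relations $R'_s=\Crg(T_s(p))$, use Proposition \ref{PropKK} in both directions, extract the terms $t_{(i,j,s)}$ from Lemma \ref{Lem1-4}, and in the converse direction feed the $\Eq^R$-equations into the compatibility of each $R'_s$. That part is fine. The genuine problem is your final synthesis of the case where $q$ contains $+$. You conclude that $\vv{V}$ lies in the class iff it satisfies $\Eq^R(p\leq q^{(k)})$ for \emph{all} $k\geq 2$, i.e.\ that the class is the countable \emph{intersection} of the strong Mal'cev classes $\mathcal{M}\bigl(\Eq^R(p\leq q^{(k)})\bigr)$. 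The quantifier is wrong: the free-algebra step produces a \emph{single} finite $k$ (depending on $\vv{V}$) with $(x_1,x_2)\in q^{(k)}(R'_1,\dots,R'_n)$, and conversely satisfying $\Eq^R(p\leq q^{(k)})$ for one $k$ already yields $p\leq q^{(k)}\leq q$ over $\Crr$. So membership is ``there exists $k$'', and since $q^{(k)}\subseteq q^{(k+1)}$ on reflexive relations the classes $\mathcal{M}\bigl(\Eq^R(p\leq q^{(k)})\bigr)$ form an increasing chain whose directed \emph{union} is the class in question; that is precisely what makes it a Mal'cev class. Your intersection would collapse to $\mathcal{M}\bigl(\Eq^R(p\leq q^{(2)})\bigr)$, i.e.\ to a single strong Mal'cev condition, which is not what is being proved and is false for conditions of distributivity type where the witnessing $k$ genuinely varies with the variety. (Your opening paragraph says ``union'' at one point but the proof as finally stated asserts the opposite, so the equivalence is not established as written.)

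A secondary, non-fatal issue: you invoke Lemma \ref{LemmClas2} both to justify applying Lemma \ref{Lem1-4} and to secure the compatibility step in the converse direction. That lemma assumes $p$ is regular, which is not a hypothesis of Proposition \ref{Propmain}, so you may not appeal to it here --- and you do not need to. The pairs in each $T_s(p)$ are off-diagonal simply because the graph construction never creates loops (new vertices are introduced at every $\circ$-split), and in the converse direction each pair $(y_{i_t},y_{j_t})\in T_s(p)$ occupies its own pair of argument slots of $t_{(i,j,s)}$, so no coordinate is ever forced into two distinct nondiagonal pairs even when the equivalence classes generated by $T_s(p)$ have more than two elements. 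Removing the appeal to regularity and replacing ``for all $k$ / intersection'' by ``for some $k$ / directed union'' turns your argument into the paper's proof.
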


\begin{proof}
	Suppose that $p \leq q$ holds for the set of all the compatible reflexive relations of algebras in $\vv{V}$, with $p$ and $q$ $n$-ary terms. Let $T_i(p)$ be constructed from $p$ as in \eqref{eqp} and let $V = \{y_1,\dots,y_m\}$ be the set of vertices of $\mathbf{G}(p)$, the graph associated with $p$. Let $\mathbf{F} = \Free(\{x_1,\dots,x_m\})$ be the  $m$-generated free algebra in $\vv{V}$. Then for all $i = 1, \dots,n$  let $R'_i$ be the compatible reflexive relation of $\mathbf{F}$ defined by:
	\begin{equation*}
		R'_i = \RS_{\mathbf{F}}(T_i(p)).
	\end{equation*}
	Then, from Proposition \ref{PropKK} item $(1)$, it follows that:
	\begin{equation*}
		(x_1,x_2) \in p(R'_1,\dots,R'_n) \subseteq q(R'_1,\dots,R'_n),
	\end{equation*}  
through the assignment $y_s \mapsto x_s$. Hence there exists a $k \in \NN\backslash\{1\}$ such that $(x_1,x_2) \in q^{(k)}(R'_1,\dots,R'_m)$. We prove that $\vv{V}$ satisfies $\Eq^{R}(p \leq q^{(k)})$. To this end let $Z = \{z_1,\dots,z_u\}$ be the set of vertices of $\mathbf{G}(q^{(k)})$. Let us consider the pair of equations
	\begin{equation} 
	\label{GoalThAlg}
	\begin{split}
	t_{(i,j,s)}(x_1,\dots,x_m,y_{i_1},\dots,y_{i_{c(s)}}) \approx \ &t_i(x_1,\dots,x_m)
	\\ t_{(i,j,s)}(x_1,\dots,x_m,y_{j_1},\dots,y_{j_{c(s)}})\approx\ &t_j(x_1,\dots,x_m),
	\end{split}
	\end{equation}
	such that $(t_i,t_j) \in Tt_s(q^{(k)})$, $t_{(i,j,s)}$ is an $(m + c(s))$-ary term, where $c(s) = |T_s(p)|$, and the set of pairs $\{(y_{i_t},y_{j_t})\}_{1 \leq t \leq c(s)} = T_s(p)$. We know that $(x_1 ,x_2) = (\pi_1^m,\pi_2^m) \in q^{(k)}(R'_1,\dots,R'_n)$ thus, by $(2)$ of Proposition \ref{PropKK}, there is an assignment $z_s \mapsto r_s(x_1,\dots,x_m)$ from $Z \rightarrow \mathbf{F}$ that extends $z_1 \mapsto x_1$, $z_2 \mapsto x_2$ such that $(r_i,r_j) \in R'_s$ whenever $(z_i, z_j)$ is an edge labelled with $X_s$ of $\mathbf{G}(q^{(k)})$. Hence $(r_i,r_j) \in R'_s$ implies, by Lemma \ref{Lem1-4}, that there exists an $(m + c(s))$-ary term $t_{(i,j,s)}$ such that $\vv{V}$ satisfies (\ref{GoalThAlg}), where $t_i^{\mathbf{F }}(x_1,\dots,x_m)  = r_i$ and $t_j^{\mathbf{F }}(x_1,\dots,x_m) = r_j$.
	
	Conversely, suppose that there exists $k \in \NN\backslash\{1\}$ such that $\Eq^R(p \leq q^{(k)})$ holds in $\vv{V}$. Then let $\mathbf{A} \in \vv{V}$, $a_1,a_2 \in A$, and let $R'_1,\dots,R'_n \in \RS(\mathbf{A})$ be such that:
	\begin{equation*}
		(a_1,a_2) \in p(R'_1,\dots,R'_n)
	\end{equation*}	
	we want to prove that 
	\begin{equation}
	\label{GoalnewPW}
	(a_1, a_2) \in q^{(k)}(R'_1,\dots,R'_n).
	\end{equation}
	From $(2)$ of Proposition \ref{PropKK} we have that there exists an assignment $\psi:Y \rightarrow A$ extending $y_1 \mapsto a_1$,$y_2 \mapsto a_2$ such that $y_s \mapsto a_s$, with $(a_i,a_j) \in R'_s$ whenever $(y_i,y_j)$ is an $X_s$-labelled edge of $\mathbf{G}(p)$. 
	
	Let $Z = \{z_1,\dots,z_u\}$ be the set of vertices of $\mathbf{G}(q^{(k)})$ and let $|T_s(p)| = c(s) $. From the hypothesis we have that for all $(t_i,t_j) \in T_s(q^{(k)})$, there exists an $m + c(s)$-ary term $t_{(t_i,t_j,s) }$ such that:
	\begin{align*}
		t_{(i,j,s)}(x_1,\dots,x_m,y_{i_1},\dots,y_{i_{c(s)}}) &\approx t_i(x_1,\dots,x_m)
		\\ t_{(i,j,s)}(x_1,\dots,x_m,y_{j_1},\dots,y_{j_{c(s)}}) &\approx t_j(x_1,\dots,x_m)
	\end{align*}     
	are satisfied in $\mathbf{A}$, where $\{(y_{i_t},y_{j_t})\}_{1 \leq t \leq c(s)} $ $= T_s(p)$. From the definition of the assignment $\psi$ we have:
	\begin{align*}
		t_i(a_1,\dots,a_m) &= t_{(i,j,s)}(a_1,\dots,a_m,a_{i_1},\dots,a_{i_{c(s)}}) R'_s \\&t_{(i,j,s)}(a_1,\dots,a_m,a_{j_1},\dots,a_{j_{c(s)}}) = \\&=t_j(a_1,\dots,a_m).
	\end{align*}
	Let $\rho:Z \rightarrow A$ be the assignment such that $z_1 \mapsto a_1$,$z_2 \mapsto a_2$ and $z_i \mapsto t_i(a_1,\dots,a_m)$ for all $3 \leq i \leq u$. Thus we have that $(t_i(a_1,\dots,a_m), $ $ t_j(a_1,\dots,a_m)) \in R'_s$ whenever $(z_i,z_j) \in \mathbf{G}(q^{(k)})$. By $(1)$ of Proposition \ref{PropKK}, we have that $(a_1,a_2) \in q^{(k)}(R'_1,\dots,R'_n) \subseteq q(R'_1,\dots,R'_n)$. Thus the class of varieties such that $p \leq q$ holds in $\Crr(A)$ for all $A \in \vv{V}$ is a Mal’cev class described by the family of equations $\{\Eq^R(p \leq q^{(k)}) \}_{k \in \NN\backslash\{1\}}$. Furthermore, if $q$ is $+$-free is a strong Mal'cev class described by $\Eq^R(p \leq q)$.
\end{proof}

\begin{proof}[Proof of Theorem \ref{sunewPWThm}]
The proof follows from Proposition \ref{Propmain}.
\end{proof}

We show an application of this Algorithm.
\begin{Thm}
	
	Let $\vv{V}$ be a variety and let $n \in \mathbb{N}\backslash\{1\}$. Then the following are equivalent:
	
	\begin{enumerate}
		
		\item[(1)] $\vv{V}$ satisfies the compatible reflexive relation inequality: 
		\begin{equation*}
			R \wedge (S \circ T) \leq  T \circ S \circ (R \wedge S) \circ^{(n)} (R \wedge T);
		\end{equation*}

		\item[(2)] there exist $2n+2$ $4$-ary terms $s_0,s_1,t_0,...,t_{2n-1}$ and $n+3$ ternary terms $p_0,p,m_0,...,$ $m_n$ such that $\vv{V}$ satisfies:
		\begin{align*}
			&x \approx p_0(x,y,z), m_n(x,y,z) \approx z&
			\\&x \approx s_0(x,y,z,y)&
			\\&p(x,y,z) \approx s_0(x,y,z,z) \approx s_1(x,y,z,x)&
			\\&m_0(x,y,z) \approx s_1(x,y,z,y)&
			\\&t_{2i}(x,y,z,x)\approx m_i(x,y,z) &
			\\&t_{2i}(x,y,z,z)\approx m_{i+i}(x,y,z) &
			\\&t_{2i+1}(x,y,z,x)\approx m_i(x,y,z) &\text{for $i$ even}
			\\&t_{2i+1}(x,y,z,y)\approx m_{i+i}(x,y,z) &\text{for $i$ even}
			\\&t_{2i+1}(x,y,z,y)\approx m_i(x,y,z)  &\text{for $i$ odd}
			\\&t_{2i+1}(x,y,z,z)\approx m_{i+i}(x,y,z)  &\text{for $i$ odd.}
		\end{align*}
	\end{enumerate}
	
\end{Thm}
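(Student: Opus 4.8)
The plan is to read the displayed inequality as the instance $p \leq q$ with $p = R \wedge (S \circ T)$ and $q = T \circ S \circ \big((R \wedge S) \circ^{(n)} (R \wedge T)\big)$, to note that both terms are $+$-free (the symbol $\circ^{(n)}$ being a shorthand for $n$ consecutive composition steps), and then to apply Proposition \ref{Propmain}: for $q$ $+$-free, the class of varieties on which $p \leq q$ holds over $\Crr$ is the strong Mal'cev class axiomatized by $\Eq^{R}(p \leq q)$. Both implications $(1) \Leftrightarrow (2)$ then reduce to computing $\Eq^{R}(p \leq q)$ explicitly through Algorithm \ref{PW-Alg} and recognising it, after a relabelling of variables, as the list in item~(2).

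First I would draw $\mathbf{G}(p)$: it has three vertices $y_1, y_2, y_3$ with edges $(y_1,y_2)$ labelled $R$, $(y_1,y_3)$ labelled $S$, $(y_3,y_2)$ labelled $T$; so $m = 3$, and under the dictionary $y_1 \leftrightarrow x$, $y_3 \leftrightarrow y$, $y_2 \leftrightarrow z$ one gets $T_R(p) = \{(x,z)\}$, $T_S(p) = \{(x,y)\}$, $T_T(p) = \{(y,z)\}$, each a singleton, so $c(R) = c(S) = c(T) = 1$ and every auxiliary term below has arity $m + 1 = 4$. Next I would expand $\circ^{(n)}$ and draw $\mathbf{G}(q)$: it is a path $v_0, v_1, \dots, v_{n+2}$ with $v_0, v_{n+2}$ the two distinguished vertices, $(v_0,v_1)$ labelled $T$, $(v_1,v_2)$ labelled $S$, and, for $i = 0, \dots, n-1$, a double edge between $v_{2+i}$ and $v_{3+i}$ with one $R$-labelled copy and one copy labelled $S$ when $i$ is even and $T$ when $i$ is odd. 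Thus $\mathbf{G}(q)$ has $n + 3$ vertices — hence $n+3$ associated $3$-ary terms, with the two endpoint ones forced to be the projections $\pi_1^3$ and $\pi_2^3$ — and $2n + 2$ edges — hence $2n + 2$ auxiliary $4$-ary terms; these counts already match item~(2).

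Then I would run Algorithm \ref{PW-Alg} edge by edge. Name the vertex terms $p_0, p, m_0, \dots, m_n$ for $v_0, v_1, v_2, \dots, v_{n+2}$, and the edge terms $s_0$ for the $T$-edge $(v_0,v_1)$, $s_1$ for the $S$-edge $(v_1,v_2)$, $t_{2i}$ for the $R$-copy and $t_{2i+1}$ for the $S$- or $T$-copy of the $i$-th double edge. For each edge labelled $R$ (respectively $S$, $T$) the two equations of Algorithm \ref{PW-Alg} are obtained by putting in the last slot the two components of $(x,z)$ (respectively $(x,y)$, $(y,z)$). Reading this off: the endpoint terms give $x \approx p_0(x,y,z)$ and $m_n(x,y,z) \approx z$; the edge $(v_0,v_1)$ gives $x \approx s_0(x,y,z,y)$ and $p(x,y,z) \approx s_0(x,y,z,z)$; the edge $(v_1,v_2)$ gives $p(x,y,z) \approx s_1(x,y,z,x)$ and $m_0(x,y,z) \approx s_1(x,y,z,y)$; and the $i$-th double edge gives $t_{2i}(x,y,z,x) \approx m_i(x,y,z)$, $t_{2i}(x,y,z,z) \approx m_{i+1}(x,y,z)$ together with, for its $S$-/$T$-copy, the parity-split pair $t_{2i+1}(x,y,z,x) \approx m_i$, $t_{2i+1}(x,y,z,y) \approx m_{i+1}$ when $i$ is even and $t_{2i+1}(x,y,z,y) \approx m_i$, $t_{2i+1}(x,y,z,z) \approx m_{i+1}$ when $i$ is odd. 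This is exactly the system in item~(2) (the ``$m_{i+i}$'' appearing there being a misprint for $m_{i+1}$), and by the remark following Algorithm \ref{PW-Alg} the ordering of the singleton pair sets is immaterial; hence the identification is complete.

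I expect the only delicate point to be the bookkeeping: unfolding $\circ^{(n)}$ so that the non-$R$ copy of the $i$-th step carries $S$ exactly when $i$ is even, keeping the dictionary $y_1 \leftrightarrow x$, $y_3 \leftrightarrow y$, $y_2 \leftrightarrow z$ uniform throughout (so that $T_R(p) = \{(x,z)\}$ and the forced endpoint terms come out as $x$ and $z$), and noticing that the two extreme vertices of $\mathbf{G}(q)$ pin $p_0$ and $m_n$ down to projections. No idea beyond Proposition \ref{Propmain} and Algorithm \ref{PW-Alg} is needed; to guard against an off-by-one or a parity slip one verifies the cases $n = 2$ and $n = 3$ by hand.
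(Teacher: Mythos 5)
Your proposal is correct and is exactly the argument the paper intends: the theorem is stated as an application of Proposition \ref{Propmain}, with no written proof, and your edge-by-edge computation of $\Eq^{R}(p\leq q)$ for $p=R\wedge(S\circ T)$, $q=T\circ S\circ(R\wedge S)\circ^{(n)}(R\wedge T)$ reproduces the system in item (2) precisely, including the correct parity split and the identification of $m_{i+i}$ as a misprint for $m_{i+1}$. The counts ($n+3$ vertex terms, $2n+2$ edge terms of arity $4$ since each $T_s(p)$ is a singleton) and the orientation conventions all check out, so nothing is missing.
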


\section{Equations equivalent over congruences and over compatible reflexive relations}
\label{SecLst}

In this section we give a characterization of an interesting set of equations which describe the same Mal'cev condition when considered with variables that run over the set of all compatible reflexive relations or over the congruence lattices. . In \cite{Lip.FCIT} P. Lipparini showed that if $p$ is a regular term, a variety satisfies the congruence identity $p(\alpha_1,\dots, \alpha_n) \leq q(\alpha_1,\dots,\alpha_n)$ if and only if satisfies the tolerance identity $p(T_1,\dots,$ $T_n) \leq q(T_1,\dots,$ $T_n)$, provided we restrict the set of the variables to run over the representable tolerances , i.e. tolerances of the form $R \circ R^{-1}$ where $R$ is a compatible reflexive relation. It is also proved that both the properties to be regular for a term and to be representable for tolerances are not so restrictive. Instead, in the case of the compatible reflexive relations, the hypotheses on the equation $p \leq q$ are restrictive.

\begin{Thm}
	\label{Theq}
	Let $p,q$ be terms of the same arity for the language $\{\circ, \wedge\}$ with $p$ regular and $q$ with at most one occurrence of $\circ$. Let $\vv{V}$ be a variety. Then the following are equivalent:	
	\begin{enumerate}
		\item[(1)] $\vv{V}$ satisfies the compatible reflexive relation inequality $p \leq q$; 
		\item[(2)]$\vv{V}$ satisfies the congruence inequality $p \leq q$. 
	\end{enumerate}
\end{Thm}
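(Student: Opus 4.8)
The plan is to prove the two directions separately, with the nontrivial content lying in (1) $\Rightarrow$ (2). The direction (2) $\Rightarrow$ (1) requires more care than it might first appear, since congruences form a \emph{smaller} set than compatible reflexive relations; so I would exploit the structure of $p$ and $q$. Since $q$ has at most one occurrence of $\circ$, by Proposition \ref{Propmain} the Mal'cev condition for the compatible reflexive relation inequality $p\leq q$ is the \emph{strong} Mal'cev class described by $\Eq^R(p\leq q)$, while the congruence inequality $p\leq q$ is described (via the Pixley--Wille algorithm) by $\Eq(p\leq q)$. So the theorem reduces to showing that the identity systems $\Eq^R(p\leq q)$ and $\Eq(p\leq q)$ are logically equivalent (interpretable in each other) as strong Mal'cev conditions. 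One inclusion is immediate: every equation in $\Eq(p\leq q)$ is obtained from the corresponding one in $\Eq^R(p\leq q)$ by identifying, for each pair $(y_{i_t},y_{j_t})\in T_s(p)$, the two auxiliary variables, i.e. by the substitution that collapses $\Eqv(T_s(p))$; hence $\Eq^R(p\leq q)\vdash \Eq(p\leq q)$, which already gives that (1) implies (2) without any hypothesis on the number of $\circ$'s in $q$.

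The heart of the matter is the reverse implication $\Eq(p\leq q)\vdash \Eq^R(p\leq q)$, and here both hypotheses—$p$ regular and $q$ having at most one $\circ$—are used. First I would invoke Lemma \ref{LemmClas2}: because $p$ is regular, each $\Eqv(T_s(p))$ has all classes of size at most $2$, so the relation $T_s(p)$ (after removing loops) is irreflexive and antisymmetric with disjoint undirected edges, exactly the situation to which Lemma \ref{Lemeqlr} applies. That lemma tells us that for every $(t_i,t_j)\in Tt_s(q)$ the "left-collapsed" and "right-collapsed" identities
\begin{equation*}
	t_i(\mathbfsl{x}^{(T_s(p),l)}) \approx t_j(\mathbfsl{x}^{(T_s(p),l)}), \qquad t_i(\mathbfsl{x}^{(T_s(p),r)}) \approx t_j(\mathbfsl{x}^{(T_s(p),r)})
\end{equation*}
follow from $\Eq(p\leq q)$. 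The point is that these two identities are precisely the two "boundary" specializations of the desired term $t_{(i,j,s)}$ in $\Eq^R(p\leq q)$: substituting $y_{i_t}\mapsto (\mathbfsl x)_{i_t}$ versus $y_{i_t}\mapsto (\mathbfsl x)_{j_t}$ recovers the two endpoint equations. So I would define, for each $(t_i,t_j)\in Tt_s(q)$, a term $t_{(i,j,s)}$ by "interpolating" between $t_i\circ$-image and $t_j\circ$-image through the auxiliary vertex coordinates; the single occurrence of $\circ$ in $q$ is exactly what guarantees that in $\mathbf{G}(q)$ each label class $Tt_s(q)$ contributes at most one edge that sits "across" a genuine composition step, so the interpolation can be carried out with a single family of auxiliary variables and the two endpoint conditions of Lemma \ref{Lemeqlr} suffice to pin $t_{(i,j,s)}$ down.

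Concretely, the step I expect to be the main obstacle is verifying that the term $t_{(i,j,s)}$ manufactured from the two boundary identities of Lemma \ref{Lemeqlr} actually satisfies \emph{both} defining equations of $\Eq^R(p\leq q)$ simultaneously, and that this is possible precisely because $q$ has at most one $\circ$. When $q$ is $\wedge$-only the pairs in $Tt_s(q)$ all connect the two fixed endpoints $y_1,y_2$, so $t_i=\pi_1$, $t_j=\pi_2$ and the construction is trivial; the single $\circ$ introduces exactly one intermediate vertex $z_t$ in $\mathbf{G}(q)$, and $z_t$ is mapped by the Pixley--Wille assignment to some term $r_t(\mathbfsl x)$ that $\Eq(p\leq q)$ only controls "up to $\Eqv(T_s(p))$-collapse" on each side. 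I would show that choosing $t_{(i,j,s)}$ to be built from $r_t$ by replacing each collapsed coordinate pair $\{i_t,j_t\}$ with a fresh auxiliary variable yields a term with $t_{(i,j,s)}(\mathbfsl x, \mathbfsl x^{(T_s(p),l)}_{\text{edge-coords}}) = t_i(\mathbfsl x)$ and $t_{(i,j,s)}(\mathbfsl x, \mathbfsl x^{(T_s(p),r)}_{\text{edge-coords}}) = t_j(\mathbfsl x)$ as consequences of $\Eq(p\leq q)$, using Lemma \ref{Lemeqlr} for each of the two substitutions. With $t_{(i,j,s)}$ in hand for every $s$ and every $(t_i,t_j)\in Tt_s(q)$, the system $\Eq^R(p\leq q)$ holds in $\vv V$; combined with Proposition \ref{Propmain} this yields that $\vv V$ satisfies the compatible reflexive relation inequality $p\leq q$, completing (2) $\Rightarrow$ (1). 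Finally I would sanity-check the construction against the two "negative" examples promised in Section \ref{SecLst}, which should show that dropping either regularity of $p$ or the single-$\circ$ restriction on $q$ breaks exactly the interpolation step above.
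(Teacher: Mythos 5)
Your reframing of the problem as an equivalence of the two identity systems is legitimate, and the easy direction $\Eq^R(p\leq q)\vdash\Eq(p\leq q)$ (collapse each pair of auxiliary variables) is fine, though $(1)\Rightarrow(2)$ is already immediate from $\Con(\mathbf{A})\subseteq\Crr(\mathbf{A})$. The genuine gap is in the central construction of $t_{(i,j,s)}$. The two identities supplied by Lemma \ref{Lemeqlr} are \emph{not} the two boundary specializations required by $\Eq^R(p\leq q)$: Algorithm \ref{PW-Alg} demands $t_{(i,j,s)}(\mathbfsl{x},x_{i_1},\dots,x_{i_{c(s)}})\approx t_i(\mathbfsl{x})$ and $t_{(i,j,s)}(\mathbfsl{x},x_{j_1},\dots,x_{j_{c(s)}})\approx t_j(\mathbfsl{x})$ with $t_i,t_j$ evaluated at the \emph{uncollapsed} tuple $\mathbfsl{x}$, whereas Lemma \ref{Lemeqlr} only controls $t_i$ and $t_j$ at the collapsed tuples $\mathbfsl{x}^{(T_s(p),l)}$ and $\mathbfsl{x}^{(T_s(p),r)}$. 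Your concrete recipe (replace \emph{both} coordinates of each pair $\{i_t,j_t\}$ by one fresh variable, then apply the lemma at each of the two substitutions) already fails for $p=R\wedge(S\circ T)$, $q=(R\wedge S)\circ T$ (Theorem \ref{ThMaj}): here $\Eq(p\leq q)$ produces a majority term $t$, and for the $R$-labelled edge, with $T_R(p)=\{(x_1,x_2)\}$ and $(t_i,t_j)=(\pi_1^3,t)$, your term is $t(u,u,x_3)$; at $u=x_2$ it evaluates to $t(x_2,x_2,x_3)\approx x_2$, while $\Eq^R$ requires the value $t(x_1,x_2,x_3)$, and $x_2\approx t(x_1,x_2,x_3)$ together with $t(x,y,x)\approx x$ forces the variety to be trivial.

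What makes the theorem true is an asymmetry your sketch misses. Since $q$ has at most one $\circ$, every edge of $\mathbf{G}(q)$ is incident to $z_1$ or $z_2$, so one endpoint term of each edge is a projection; moreover in $\mathbf{G}(p)$ the vertex $y_1$ has only outgoing and $y_2$ only incoming edges, whence $\pi_1^m(\mathbfsl{x}^{(T_k(p),l)})=x_1$ and $\pi_2^m(\mathbfsl{x}^{(T_k(p),r)})=x_2$. Accordingly the left-collapsed identity of Lemma \ref{Lemeqlr} is used for edges leaving $z_1$ and the right-collapsed one for edges entering $z_2$ --- never both for the same edge. The paper does not construct $t_{(i,j,s)}$ at all: it argues semantically, assigning $z_3\mapsto t(a_1,\dots,a_m)$ and verifying each edge by a chain such as $a_1=\pi_1^m(\mathbfsl{a}^{(T_k(p),l)})=t(\mathbfsl{a}^{(T_k(p),l)})\mathrel{R_k^*}t(\mathbfsl{a})$, where the last step uses only reflexivity and compatibility of $R_k^*$. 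If you want to keep your syntactic route, you must use the corresponding asymmetric substitution (replace only the coordinate $x_{j_t}$ by $u_t$ for an edge leaving $z_1$, only $x_{i_t}$ for an edge entering $z_2$), so that one boundary equation holds syntactically and only the other invokes Lemma \ref{Lemeqlr}; as written, your interpolation step does not go through.
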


\begin{proof}
	The implication $(1)\Rightarrow(2)$ is obvious. For $(2)\Rightarrow(1)$ let $\vv{V}$ be a variety that satisfies $p \leq q$. Let $\Eq(p \leq q)$ the set of equation obtained using the Algorithm \ref{PixWilAlg}. From \cite{Pix.LMC}, \cite{Wil.K} we have that $\vv{V}$ satisfies the Mal’cev-condition induced by $\Eq(p \leq q)$. Let $\mathbf{A}$ be an algebra, $R_1^*,\dots,R_n^* \in \Crr(\mathbf{A})$, and let $a_1,a_2 \in A$ be such that $(a_1,a_2) \in p(R_1^*,\dots,R_n^*)$. By $(2)$ of Proposition \ref{PropKK}, there exist elements $a_3,\dots,a_m$ $\in A$, where $m$ is the number of vertices $Y = \{y_1,\dots,y_m\}$ of $\mathbf{G}(p)$, and an assignment $\psi: Y \rightarrow A$ such that $y_i \mapsto a_i$, for $1 \leq i \leq m$, and $(a_i,a_j) \in R^*_k$ whenever $(y_i,y_j)$ is an edge of $\mathbf{G}(p)$ with label $X_k$. Let us denote by $\mathbfsl{a} = (a_1,\dots,a_m)$ and by $\mathbfsl{x} = (x_1,\dots,x_m)$. The goal is to prove that $(a_1,a_2) \in q(R_1^*,\dots,R_n^*)$. We need to distinguish between two cases. 
	\newline Case $q$ with no occurrence of $\circ$: 
	
	In this case we obtain equations satisfied by either only  the trivial variety or all varieties. We have that $\mathbf{G}(q)$ has only two vertices $\{z_1,z_2\} =Z$. Let us fix the assignment $\rho: Z \rightarrow A$ such that $z_1 \mapsto a_1$ and $z_2 \mapsto a_2$. We prove that this assignment satisfies $(1)$ of Proposition \ref{PropKK}. To this end let $(z_1,z_2)$ be an edge of $\mathbf{G}(q)$ labelled by $X_k$. By Lemma \ref{Lemeqlr}, we have that $\mathbf{A}$ satisfies the equation $\pi_1^m(\mathbfsl{x}^{(T_k(p),l)}) \approx \pi_2^m(\mathbfsl{x}^{(T_k(p),l)})$. Hence
	\begin{equation*}
		a_1 =   \pi_1^m(\mathbfsl{a}^{(T_k(p),l)}) = \pi_2^m(\mathbfsl{a}^{(T_k(p),l)})\ R^*_k\ \pi_2^m(\mathbfsl{a}) = a_2.
	\end{equation*}
	Case $q$ with exactly one occurrence of $\circ$: 
	
	In this case we apply the original Pixley-Wille algorithm \ref{PixWilAlg} to $p \leq q$ finding the set of equations $\Eq(p \leq q)$. We see that $\mathbf{G}(q)$ has only three vertices and hence the equations in $\Eq(p \leq q)$ involve only three $m$-ary terms $\pi_1^m,\pi_2^m,t$. 
		
	Let $Z= \{z_1,z_2,z_3\}$ be the set of vertices of $\mathbf{G}(q)$. Let us define the assignment $\rho: Z \rightarrow A$ such that $z_i \mapsto a_i$, for $i = 1,2$, and $z_3 \mapsto t(a_1,\dots,a_m)$. 
	Next we prove that the assignment $\rho$ satisfies the hypothesis in $(1)$ of Proposition \ref{PropKK}. To this end let us suppose that $(z_i,z_j)$ is an edge of $\mathbf{G}(q)$ labelled with $X_k$. We can observe that $(t_i,t_j) \in Tt_k(q)$ where $t_1 = \pi_1^m$, $t_2 = \pi_2^m$, and $t_3 = t$. Then we have three cases.
	
	Subcase $(i,j) = (1,2)$: in this case the proof is identical to the one of  $q$ with no occurrence of $\circ$.

	Subcase $(i,j) = (1,3)$: by Lemma \ref{Lemeqlr}, we have that $\mathbf{A}$ satisfies the equation $\pi_1^m(\mathbfsl{x}^{(T_k(p),l)}) \approx t(\mathbfsl{x}^{(T_k(p),l)})$. Hence
	\begin{equation*}
		a_1 =  \pi_1^m\ (\mathbfsl{a}^{(T_k(p),l)})\ = t(\mathbfsl{a}^{(T_k(p),l)}) \ R^*_k\ t(\mathbfsl{a}).
	\end{equation*}
	Subcase $(i,j) = (3,2)$: in this case the proof is symmetrical to Subcase $(i,j) = (1,3)$. By Lemma \ref{Lemeqlr}, we have that $\mathbf{A}$ satisfies the equation $\pi_2^m(\mathbfsl{x}^{(T_k(p),r)}) \approx t(\mathbfsl{x}^{(T_k(p),r)})$. Hence
	\begin{equation*}
		t(\mathbfsl{a})  \ R^*_k\ t(\mathbfsl{a}^{(T_k(p),r)})  = \pi_2^m(\mathbfsl{a}^{(T_k(p),r)}) = a_2.
	\end{equation*}	
	Thus $\rho$ satisfies the hypothesis in $(1)$ of Proposition \ref{PropKK} and $(a_1,a_2) \in q(R_1^*,\dots,R_n^*)$.
\end{proof}

We can see that the hypotheses of Theorem \ref{Theq} are difficult to expand. Indeed the equation $X \circ X \leq X$ for compatible reflexive relations implies congruence permutability but is satisfied by all the varieties if the variables run over the congruence lattices. Moreover, the equation $R \wedge (S \circ T) \leq (R \wedge  S) \circ (R \wedge  T) \circ (R \wedge  S)$ implies majority for compatible reflexive relations (\cite{Lip.RII3}), but when the variables run over the congruence lattices implies only $3$-distributivity, which is strictly weaker. With the following theorem we show an example of application of Theorem \ref{Theq} that can be deduced also from \cite[page $6$]{Lip.RII3}.

\begin{Thm}
	\label{ThMaj}
	Let $\vv{V}$ be a variety. Then the following are equivalent:
	
	\begin{enumerate}
		
		\item[(1)] $\vv{V}$ satisfies the congruence inequality 
		\begin{center}
			$\alpha \wedge (\beta \circ \gamma) \leq (\alpha \wedge \beta) \circ \gamma$;
		\end{center}

		\item[(2)] $\vv{V}$ satisfies the compatible reflexive relation inequality 
		\begin{center}
			$R \wedge (S \circ T) \leq (R \wedge S) \circ T$;
		\end{center}
		
	\end{enumerate}
	
\end{Thm}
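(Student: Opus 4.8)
The plan is to apply Theorem~\ref{Theq} directly. First I would check that the two terms $p = \alpha \wedge (\beta \circ \gamma)$ and $q = (\alpha \wedge \beta) \circ \gamma$ (in the variables $X = \alpha$, $Y = \beta$, $Z = \gamma$) satisfy the hypotheses of that theorem: $p$ is regular, since its subterms $\alpha$ and $\beta \circ \gamma$ have $L_{\beta \circ \gamma} = \{\beta\}$, $R_{\beta \circ \gamma} = \{\gamma\}$, and the conditions $L_\alpha \cap L_{\beta \circ \gamma} = \{\alpha\}\cap\{\beta\} = \emptyset$, $R_\alpha \cap R_{\beta \circ \gamma} = \{\alpha\}\cap\{\gamma\} = \emptyset$, $R_\beta \cap L_\gamma = \emptyset$ all hold; and $q$ has exactly one occurrence of $\circ$. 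Both terms are $3$-ary and of the same arity. Hence Theorem~\ref{Theq} applies verbatim.

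Given that, the equivalence is immediate: statement~(2) is precisely ``$\vv{V}$ satisfies the compatible reflexive relation inequality $p \leq q$'' and statement~(1) is precisely ``$\vv{V}$ satisfies the congruence inequality $p \leq q$,'' so $(1) \Leftrightarrow (2)$ is exactly the conclusion of Theorem~\ref{Theq}. I would write the proof as: observe $p$ is regular and $q$ has one occurrence of $\circ$, then invoke Theorem~\ref{Theq}. That is essentially the whole argument.

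The only subtlety—and the one place where a careless reader might stumble—is verifying regularity of $p$; in particular one must note that although the variable $\alpha$ (namely $X_1$) appears on both sides of the outer $\wedge$, the relevant disjointness conditions are about $L$- and $R$-sets of the two conjuncts, not about the variable sets, and $\alpha \notin \{\beta\}$, $\alpha \notin \{\gamma\}$. So there is no obstacle of substance here. For completeness I might also remark, as the surrounding text does, that this result can alternatively be extracted from \cite[page~$6$]{Lip.RII3}, but the self-contained derivation from Theorem~\ref{Theq} is cleaner. The main (and essentially only) ``work'' is the hypothesis check, which is routine.

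\begin{proof}
	The term $p = \alpha \wedge (\beta \circ \gamma)$ is regular: writing $r = \beta \circ \gamma$ we have $L_r = \{\beta\}$ and $R_r = \{\gamma\}$, so $R_\beta \cap L_\gamma = \emptyset$ gives that $r$ is regular by Definition~\ref{DefRterms}(2), and $L_\alpha \cap L_r = \{\alpha\} \cap \{\beta\} = \emptyset$ together with $R_\alpha \cap R_r = \{\alpha\} \cap \{\gamma\} = \emptyset$ gives that $p = \alpha \wedge r$ is regular by Definition~\ref{DefRterms}(3). The term $q = (\alpha \wedge \beta) \circ \gamma$ has exactly one occurrence of $\circ$, and $p$ and $q$ have the same arity. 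Hence the hypotheses of Theorem~\ref{Theq} are satisfied, and the equivalence of $(1)$ and $(2)$ is precisely its conclusion.
\end{proof}
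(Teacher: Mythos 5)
Your proposal is correct and follows exactly the route the paper takes: the paper's own proof is the one-line remark that the theorem ``follows from Theorem \ref{Theq}.'' Your explicit verification that $p = \alpha \wedge (\beta \circ \gamma)$ is regular and that $q = (\alpha \wedge \beta) \circ \gamma$ has exactly one occurrence of $\circ$ is accurate and merely spells out the hypothesis check the paper leaves implicit.
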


The proof of this theorem follows from Theorem \ref{Theq} and clearly these two conditions imply majority. 

We can also observe the following equivalence of equations.

\begin{Thm}
\label{ThR}
	Let $\mathbf{A}$ be an algebra. Then the following are equivalent:
	
	\begin{enumerate}
	
	\item[(1)] $\mathbf{A}$ satisfies the compatible reflexive relation inequality 
	\begin{equation*}
		R \wedge (S \circ T) \leq (R \wedge S) \circ T
	\end{equation*}

	\item[(2)] $\mathbf{A}$ satisfies the compatible reflexive relation inequality 
	\begin{equation*}
		R \wedge (S \circ T) \leq (R \wedge S) \circ (R \wedge T).
	\end{equation*}
	
\end{enumerate}	
\end{Thm}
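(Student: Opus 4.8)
The plan is to prove both implications directly, working with the explicit description of the relational products rather than invoking the Algorithm. The implication $(2)\Rightarrow(1)$ is the easy direction: since $R \wedge T \subseteq T$ always holds, we get $(R \wedge S) \circ (R \wedge T) \subseteq (R \wedge S) \circ T$, and composing with the hypothesis $R \wedge (S \circ T) \leq (R \wedge S) \circ (R \wedge T)$ yields $R \wedge (S \circ T) \leq (R \wedge S) \circ T$ immediately. So the real content is $(1)\Rightarrow(2)$.

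For $(1)\Rightarrow(2)$, suppose $\mathbf{A}$ satisfies $R \wedge (S \circ T) \leq (R \wedge S) \circ T$ for all compatible reflexive $R,S,T$, and fix such $R^*,S^*,T^*$ together with a pair $(a,b) \in R^* \wedge (S^* \circ T^*)$. Pick a witness $c$ with $a\, S^*\, c\, T^*\, b$, and note $(a,b)\in R^*$. First I would apply the hypothesis to the triple $(R^*,S^*,T^*)$ to obtain an element $d$ with $a\, (R^*\wedge S^*)\, d$ and $d\, T^*\, b$. To bring $R^*$ into the second factor as well, the trick is to re-apply the hypothesis, but now to the \emph{reversed} configuration: observe that $(b,a)\in R^*$ need not hold, so instead I would use that $(a,b)\in R^*$ and $(d,b)\in T^*$ with $(a,d)\in R^*$ (since $(a,d)\in R^*\wedge S^* \subseteq R^*$), hence $(d,b)\in R^*$ as well because $R^*$... — this is exactly the subtle point. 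The clean way is: from $a\, R^*\, b$ and $a\, R^*\, d$ we want $d\, R^*\, b$, which is false in general. So the correct second step is to feed the hypothesis the triple $(R^*, T^*{}^{-1}\!$-style configuration$)$ — more precisely, apply $(1)$ to the pair $(d,b)$: we have $d\, T^*\, b$ and want to split $T^*$ through $R^*\wedge T^*$. Concretely, since $(a,b)\in R^*$ and $(a,d)\in R^*$, consider instead the pair $(d,a)$...

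Let me restate the key step cleanly: after the first application we have $a\,(R^*\wedge S^*)\,d$ and $d\,T^*\,b$ with moreover $(a,b)\in R^*$. Now apply the hypothesis $(1)$ a second time to the compatible reflexive relations $R^*, \Crg_{\mathbf{A}}(\{(d,a)\}), T^*$ reversed appropriately, or — the approach I actually expect to work — note that $d\,T^*\,b$, $d\,R^*\,?$: we do know $d\,(R^*\wedge S^*)^{-1}\,a\,R^*\,b$, so $d\,(R^*\circ R^*)\,b$; this does not immediately give $d\,R^*\,b$. The honest resolution is to apply $(1)$ to the triple $(R^*, S^*, T^*)$ where now we track that the \emph{path} $a\to d\to b$ with $a\,R^*\,b$ and $a\,R^*\,d$ forces, by a second use of the inequality applied to $(d, b)$ seen inside $R^*\wedge(S'\circ T^*)$ for a suitable $S'=\Crg_{\mathbf{A}}(\{(d,a)\})$ with $a\,R^*\,b$ giving $d\,R^*\,b$ via reflexivity and $a\,S'\,d$ — I would have $d\,(R^*\wedge S')\,e$ and $e\,T^*\,b$, and $e\in\{d,a,\dots\}$, ultimately collapsing to show $d\,(R^*\wedge T^*)\,b$.

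The main obstacle is precisely this: extracting $(R^*\wedge T^*)$ in the second coordinate from the asymmetry of the inequality $R\wedge(S\circ T)\leq (R\wedge S)\circ T$, which is "left-handed". I expect the cleanest argument routes through Theorem \ref{Theq}: the right-hand inequality $R\wedge(S\circ T)\leq (R\wedge S)\circ(R\wedge T)$ is between regular terms (the left side $R\wedge(S\circ T)$ is regular, and the right side has one occurrence of $\circ$ — wait, it has two distinct $\circ$? no, one $\circ$), so if one can show the congruence versions of (1) and (2) coincide, then Theorem \ref{Theq} transfers the equivalence to compatible reflexive relations. So the structured plan is: (a) show that over congruence lattices, $\alpha\wedge(\beta\circ\gamma)\leq(\alpha\wedge\beta)\circ\gamma$ and $\alpha\wedge(\beta\circ\gamma)\leq(\alpha\wedge\beta)\circ(\alpha\wedge\gamma)$ are equivalent (using symmetry of congruences: $(\alpha\wedge\beta)\circ\gamma = ((\alpha\wedge\beta)\circ\gamma)^{-1}\cap(\dots)$ arguments, or directly that the first implies $\alpha\wedge(\gamma\circ\beta)\leq\gamma\circ(\alpha\wedge\beta)$ by taking converses, then intersecting); (b) invoke Theorem \ref{Theq} on each inequality separately, which requires checking $(R\wedge S)\circ(R\wedge T)$ has "at most one occurrence of $\circ$" — it has exactly one, so this is fine, and $R\wedge(S\circ T)$ is regular; (c) conclude. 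The delicate point in (a) is that $(\alpha\wedge\beta)\circ\gamma$ is generally not symmetric, so the equivalence of the two congruence inequalities needs the little lemma that $\delta\circ\varepsilon = \varepsilon\circ\delta$ has a converse-closed consequence; I expect this to be the step needing the most care, but it is standard.
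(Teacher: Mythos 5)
Your proposal does not close the argument. The direct attempt at $(1)\Rightarrow(2)$ stalls exactly where you say it does, and the fallback through Theorem \ref{Theq} cannot rescue it: Theorem \ref{ThR} is stated for a \emph{single algebra} $\mathbf{A}$, whereas Theorem \ref{Theq} (and the Pixley--Wille machinery behind it) is a statement about \emph{varieties}, so establishing that the two congruence inequalities are equivalent for a variety says nothing about $\Crr(\mathbf{A})$ for one fixed algebra. Neither branch of your plan therefore yields the theorem as stated. (For what it is worth, the paper itself prints no proof of Theorem \ref{ThR}; it is credited to a referee's suggestion, so the evaluation here is on the merits of your argument alone.)

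The missing idea is simply that $\Crr(\mathbf{A})$ is closed under converse and under intersection, so inequality $(1)$ may be instantiated at $R^{-1}$, $T^{-1}$, $S^{-1}$. Doing so and taking converses of both sides, using $(S\circ T)^{-1}=T^{-1}\circ S^{-1}$, turns the ``left-handed'' inequality into its right-handed companion $R\wedge(S\circ T)\leq S\circ(R\wedge T)$, valid for all $R,S,T\in\Crr(\mathbf{A})$. This is precisely the step you were groping for with the ``reversed configuration'': you do not need $(b,a)\in R^{*}$, because the second application of $(1)$ is made to the relation $(R^{*})^{-1}$, which is again compatible and reflexive and does contain $(b,a)$; your detour through $\Crg_{\mathbf{A}}(\{(d,a)\})$ is unnecessary and leads nowhere. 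With the right-handed inequality in hand, take $(a,b)\in R\wedge(S\circ T)$; it gives $(a,b)\in S\circ(R\wedge T)$, hence $(a,b)\in R\wedge\bigl(S\circ(R\wedge T)\bigr)$, and applying $(1)$ once more with $T$ replaced by the compatible reflexive relation $R\wedge T$ yields $(a,b)\in(R\wedge S)\circ(R\wedge T)$, which is $(2)$. Your $(2)\Rightarrow(1)$ direction is correct as written.
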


\section*{Acknowledgements}

The author thanks Paolo Alglianò, Erhard Aichinger, Sebastian Kreinecker, and Bernardo Rossi for many hours of fruitful discussions. The author thanks the referees for their useful suggestions and mention that Theorem \ref{ThR} was suggested by one of the referees.

\bibliographystyle{alpha}

\end{document}